\newcommand{\F}{\mathbb{F}}
\newcommand{\Fp}{\mathbb{F}_p}
\newcommand{\zli}{\zeta_{\ell^{\infty}}}
\newcommand{\KK}{\mathbb{K}}
\newcommand{\Q}{\mathbb{Q}}
\newcommand{\Qab}{\mathbb{Q}^{\mathrm{ab}}}
\newcommand{\Z}{\mathbb{Z}}
\newcommand{\hZ}{\widehat{\Z}}
\newcommand{\Zl}{\mathbb{Z}_{\ell}}
\newcommand{\Qp}{\mathbb{Q}_{p}}
\newcommand{\Qpbar}{\overline{\mathbb{Q}}_{p}}
\newcommand{\Ql}{\mathbb{Q}_{\ell}}
\newcommand{\kbar}{\overline{k}}
\newcommand{\Qlbar}{\overline{\mathbb{Q}}_{\ell}}
\newcommand{\Qbar}{\overline{\mathbb{Q}}}
\newcommand{\calF}{\mathcal{F}}
\newcommand{\Rep}{\mathbf{Rep}}
\DeclareMathOperator{\Spec}{Spec}
\DeclareMathOperator{\Gal}{Gal}
\DeclareMathOperator{\Aut}{Aut}
\DeclareMathOperator{\GL}{GL}
\newcommand{\Pibar}{\overline{\Pi}}
\newcommand{\et}{\textrm{ét}}
\begin{document}

\newtheorem{theo}[subsection]{Theorem}
\newtheorem*{theo*}{Theorem}
\newtheorem{conj}{Conjecture}

\renewcommand{\theconj}{\Alph{conj}}
\newtheorem*{conj*}{Conjecture}
\newtheorem{prop}[subsection]{Proposition}
\newtheorem{lemm}[subsection]{Lemma}
\newtheorem*{lemm*}{Lemma}
\newtheorem{coro}[subsection]{Corollary}

\theoremstyle{definition}
\newtheorem{defi}[subsection]{Definition}
\newtheorem*{defi*}{Definition}
\newtheorem{cons}[subsection]{Construction}
\newtheorem{ques}[subsection]{Question}
\newtheorem{rema}[subsection]{Remark}
\newtheorem{exam}[subsection]{Example}

\numberwithin{equation}{section}

\title{On the semi-simplicity conjecture for $\Qab$}
\author{Marco D'Addezio}

\address{Max-Planck-Institut für Mathematik, Vivatsgasse 7, 53111, Bonn, Germany}
\email{ daddezio@mpim-bonn.mpg.de}
\begin{abstract}
	We show that the semi-simplicity conjecture for finitely generated fields follows from the conjunction of the semi-simplicity conjecture for finite fields and for the maximal abelian extension of the field of rational numbers.
\end{abstract}
\maketitle

\section{Notation}

\subsection{}
 Let $k$ be a field, $\overline{k}$ an algebraic closure and $\ell$ a prime number different from the characteristic of $k$. Write $\Gamma_k$ for the Galois group $\Gal(\overline{k}/k)$. We denote by $\Rep_{\Ql}(\Gamma_{k})$ the neutral Tannakian category of continuous finite-dimensional $\Ql$-linear representations of $\Gamma_k$. We shall refer to the objects in $\Rep_{\Ql}(\Gamma_{k})$ simply as \textit{($\ell$-adic) representations of $\Gamma_k$}.
 
Let $\Rep^{\mathrm{geo}}_{\Ql}(\Gamma_{k})$ be the smallest neutral Tannakian subcategory of $\Rep_{\Ql}(\Gamma_{k})$, closed under subquotients, which contains all the $\ell$-adic representations of $\Gamma_k$ of the form $H^i_{\et}(X_{\kbar},\Ql)$, where $i$ is an integer, $X$ is a smooth and projective variety over $k$ and $X_{\kbar}:=X\otimes_k \kbar$. We shall say that an object in $\Rep^{\mathrm{geo}}_{\Ql}(\Gamma_{k})$ is an \textit{$\ell$-adic representation coming from geometry}.
\subsection{}
Let $\KK$ be a field of characteristic $0$ and $V$ a finite-dimensional $\KK$-vector space. We say that a linear endomorphism $\varphi$ of $V$ is \textit{semi-simple} if it is diagonalizable after a finite extension of $\KK$. 
Let $V_\rho$ be an $\ell$-adic representation of $\Gamma_\Q$ and $p\neq\ell$ a prime number where $V_\rho$ is unramified, we say that $\rho$ is \textit{semi-simple at $p$} if one (or equivalently any) Frobenius element at $p$ acts via a semi-simple automorphism.

\subsection{}
For a ring $R$ and a positive integer $n$ we write $R[\zeta_n]$ for the ring quotient $R[t]/(t^n-1)$ and $R[\zeta_\infty]$ for the ring colimit $\varinjlim_{n} R[\zeta_n]$. If $\ell$ is a prime number, we denote by $R[\zeta_{\ell^{\infty}}]$ the ring $\varinjlim_{n} R[\zeta_{\ell^n}]$. Besides, we denote by $\Qab$ the maximal abelian extension of $\Q$ in $\Qbar$.
\section{Introduction}
\subsection{}Let $k$ be a field, we consider the following statement.
\begin{itemize}
	\item[] $S(k)$: For every prime number $\ell$ different from the characteristic of $k$, an $\ell$-adic representation of $\Gamma_k$ coming from geometry is semi-simple.
\end{itemize}
\noindent
Grothendieck and Serre conjectured that for every finitely generated field $k$, the assertion $S(k)$ is true, \cite{Tat}. This conjecture is commonly known as the \textit{semi-simplicity conjecture}. Note that the conjecture predicts that $S(k)$ is true even for fields that are infinite Galois extensions of a finitely generated field. Indeed, if $k'/k$ is a Galois extension then $S(k)$ implies $S(k')$ because the restriction of a semi-simple representation to a normal subgroup is semi-simple. For this reason, Grothendieck--Serre semi-simplicity conjecture predicts, for example, that the $\ell$-adic representions of $\Gamma_{\Qab}$ coming from geometry are semi-simple. On the other hand, it is worth recalling that $S(k)$ is false in general. For example, over the local fields $\Qp$ and $\mathbb{C}((t))$ the representations coming from geometry are not semi-simple in general. 

In this article, we prove the following implication.

\begin{theo}[Theorem \ref{main:t}]
Let $k$ be a Galois extension of a finitely generated field. The conjunction of $S(\Fp)$ for every prime number $p$ and $S(\Qab)$ implies $S(k)$.
\end{theo}

\subsection{}Let us make a brief summary on what is already known about Grothendieck--Serre semi-simplicity conjecture. The first result was obtained in 1948 by Weil, who proved the conjecture for abelian varieties (and hence for curves) over finite fields. In this case, the semi-simplicity follows from the positivity of the Rosati involution. Later, in 1983, Faltings proved the semi-simplicity conjecture for abelian varieties over number fields, as an intermediate step of his proof of the Mordell conjecture. By the work of Deligne, both these results extend to K3 surfaces, thanks to the Kuga--Satake construction.

In 1980, Deligne obtained a general semi-simplicity result in positive characteristic, as a consequence of his theory of weights.

\begin{theo}[{\normalfont\cite[Théorème 3.4.1.(iii)]{Weil2}}]
	\label{deligne:t}
	Let $X$ be a normal scheme of finite type over $\Fp$. For every $\iota$-pure lisse $\Qlbar$-sheaf over $X$, the inverse image over $X_{\overline{\F}_p}$ is semi-simple. In particular, for every finitely generated field extension $k_{\infty}/\overline{\F}_p$, the assertion $S(k_{\infty})$ is true.
\end{theo}
Note that Theorem \ref{deligne:t} is related to the semi-simplicity conjecture because when $k$ is a finitely generated field extension of $\Fp$, the representations of $k$ coming from geometry are direct sum of pure representations, \cite{Weil1}. Using Theorem \ref{deligne:t}, Lei Fu proved the following.

\begin{theo}[\cite{Fu}]\label{Fu:t}
Let $X$ be a normal connected scheme of finite type over
$\F_p$ and let $\calF$ be a $\iota$-pure lisse $\Qlbar$-sheaf over $X$. If there exists a closed point $x$ of $X$ such that the Frobenius automorphism of $\calF$ at $x$ is semi-simple, then $\calF$ is a
semi-simple lisse sheaf over $X$. In particular, for every finitely generated field $k$ of positive characteristic $p$, the assertion $S(\Fp)$ implies $S(k)$.
\end{theo}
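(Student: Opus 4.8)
The plan is to translate everything into the language of monodromy groups and to combine Deligne's theorem (Theorem \ref{deligne:t}), which supplies semisimplicity after pullback to $X_{\overline{\F}_p}$, with the single semisimple Frobenius, which will be used to control the ``arithmetic direction''. First I would reduce to the case where $X$ is geometrically connected over $\F_p$: replacing $\F_p$ by the field of constants of $X$ replaces $\pi_1(X)$ by a normal subgroup of finite index and the Frobenius at $x$ by a power of itself, and in characteristic $0$ a power of a semisimple automorphism is semisimple and conversely, while semisimplicity of a representation passes to and is detected on normal subgroups of finite index; so neither the hypothesis nor the conclusion changes. Then $\calF$ corresponds to a continuous representation $\rho\colon\pi_1(X)\to\GL(V)$ sitting in an exact sequence $1\to\pi_1(X_{\overline{\F}_p})\to\pi_1(X)\to\widehat{\Z}\to 1$, and $\calF$ is a semisimple lisse sheaf if and only if $V$ is a semisimple $\pi_1(X)$-module.

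Next I would introduce $G\subseteq\GL(V)$, the Zariski closure of $\rho(\pi_1(X))$, and $H\subseteq G$, the Zariski closure of $\rho(\pi_1(X_{\overline{\F}_p}))$. Since $\pi_1(X_{\overline{\F}_p})$ is normal in $\pi_1(X)$ with commutative ($\cong\widehat{\Z}$) quotient, $H$ is normal in $G$ and $G/H$ is commutative. By Theorem \ref{deligne:t} the sheaf $\calF|_{X_{\overline{\F}_p}}$ is semisimple, hence $V$ is a semisimple $H$-module; as $H$ acts faithfully on $V$ this forces its unipotent radical to act trivially, so $R_u(H)=1$ and $H^{\circ}$ is reductive. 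It follows that $R_u(G)\cap H$, a normal unipotent subgroup of $H$, has trivial identity component and is therefore trivial (a finite unipotent group in characteristic $0$ is trivial); consequently the projection $G\to G/H$ maps $R_u(G)$ \emph{isomorphically} onto a connected unipotent subgroup of the commutative group $G/H$.

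Now I would feed in the hypothesis. Put $\gamma:=\rho(F_x)$; this is a semisimple element of $\GL(V)$, hence a semisimple element of $G$, and its image $\bar\gamma$ in $G/H$ is semisimple. Because $\rho$ composed with $G\to G/H$ factors through $\widehat{\Z}$ with Zariski-dense image, the cyclic group generated by $\bar\gamma$ is Zariski-dense in a closed subgroup of finite index in $G/H$; replacing $\bar\gamma$ by a suitable power (still semisimple) I may assume its Zariski closure is exactly $(G/H)^{\circ}$. Writing the connected commutative group $(G/H)^{\circ}$ as the product of a torus and a vector group, $\bar\gamma$ then lies in the torus factor (its unipotent part vanishes), so its Zariski closure cannot exhaust $(G/H)^{\circ}$ unless the vector-group factor is trivial; hence $(G/H)^{\circ}$ is a torus. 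A connected unipotent subgroup of a group whose identity component is a torus is trivial, since there are no nonzero homomorphisms from a vector group to a torus; hence the image of $R_u(G)$ in $G/H$ is trivial, and by the previous paragraph $R_u(G)=1$. Therefore $G^{\circ}$ is reductive, $V$ is a semisimple $G$-module, hence a semisimple $\pi_1(X)$-module, and $\calF$ is a semisimple lisse sheaf.

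I expect the main points requiring care to be the component-and-finite-index bookkeeping --- checking that passing to a power of $\bar\gamma$, and implicitly to the preimage in $G$ of $\overline{\langle\bar\gamma\rangle}$, does not alter the unipotent radical (it does not, these being finite-index subgroups containing $G^{\circ}$) --- together with the step deducing $R_u(H)=1$ rather than merely that $R_u(H)$ acts trivially on some quotient. Everything else is formal group theory. It is worth noting that $\iota$-purity enters only through Theorem \ref{deligne:t}: once geometric semisimplicity is available the argument is insensitive to weights, and the whole role of the hypothesis on $\calF_x$ is to rule out a ``unipotent'' part of the arithmetic monodromy over and above the geometric one.
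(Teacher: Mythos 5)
Your proof is correct, but it is not the argument of \cite{Fu} (which the paper only cites; the shape of Fu's proof is visible in the paper's adaptation of it in Proposition \ref{rest-to-Weil:p} and Theorem \ref{pt-ab:t}). Fu works with the Weil group $\pi_1(X_{\overline{\F}_p})\rtimes\Z$ and with the algebraic group $G^0\rtimes\Z$, where $G^0$ is the geometric monodromy group (reductive by Theorem \ref{deligne:t}); he invokes Deligne's Lemme 1.3.10 to produce a central element $(g',d)$ with $d\neq 0$ and then a lemma asserting that a representation of such a group is semi-simple as soon as the image of $1\in\Z$ is a semi-simple automorphism. You instead stay inside the full arithmetic monodromy group $G$ and show directly that $R_u(G)=1$: Deligne's theorem gives $R_u(H)=1$, hence $R_u(G)\cap H$ is finite unipotent and thus trivial, so $R_u(G)$ embeds into the commutative quotient $G/H$; the semi-simple Frobenius, whose image topologically generates a finite-index subgroup of the image of $\widehat{\Z}$, forces $(G/H)^{\circ}$ to be a torus, which admits no nontrivial connected unipotent subgroup. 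The mechanism is the same in both proofs --- the only possible obstruction is a unipotent piece of the arithmetic monodromy transverse to the geometric one, and one semi-simple Frobenius rules it out --- but your version trades the central-element trick for the decomposition of a connected commutative group in characteristic $0$ as torus times vector group, which is arguably more transparent and self-contained. Two points deserve a cleaner statement in a final write-up: the reduction to the geometrically connected case does not replace $\pi_1(X)$ by a finite-index subgroup (the scheme and its fundamental group are unchanged; only the target of the degree map changes from $\Gal(\overline{\F}_p/\F_p)$ to $\Gal(\overline{\F}_p/\F_q)$), and the passage from ``the Zariski closure of $\langle\bar\gamma\rangle$ has finite index in $G/H$'' to ``a power of $\bar\gamma$ has Zariski closure exactly $(G/H)^{\circ}$'' should be spelled out, as you anticipate, by counting components of that closure.
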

The idea of the proof of Theorem \ref{Fu:t} is to use the exact sequence 
\begin{equation}\label{k:eq}
1\to \Gamma_{k\overline{\F}_p}\to \Gamma_{k}\to\Gal(\overline{\F}_p/\F_p)\to 1 
\end{equation}
where $k$ is the function field of $X$. Thanks to (\ref{Fu:t}) one can combine the semi-simplicity of the restriction to $\Gamma_{k\overline{\F}_p}$ provided by Theorem \ref{deligne:t} and the condition at $x$ in order to get the semi-simplicity of the entire representation of $\Gamma_{k}$.

We end this section mentioning the main general result which is known so far on the semi-simplicity conjecture in characteristic $0$.
\begin{theo}[\cite{Serre}]
	\label{s-s-char-0:t} If $k$ is a finitely generated field of characteristic $0$, then $S(\Q)$ implies $S(k)$.
\end{theo}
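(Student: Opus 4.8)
The plan is to reduce, using the observation recalled in the introduction that semi-simplicity restricts to normal subgroups, to the implication $S(\Q)\Rightarrow S(k_0)$ for $k_0$ a finitely generated field of characteristic $0$, and then to carry out Serre's specialization argument. To set up the reductions: write $k$ as a Galois extension of a finitely generated field $k_0$ of characteristic $0$; since the restriction of a semi-simple representation to a normal subgroup is semi-simple, $S(k_0)$ implies $S(k)$, so it suffices to prove $S(\Q)\Rightarrow S(k_0)$. I would also record that $S(\Q)$ implies $S(F)$ for every number field $F$: if $M/\Q$ denotes the Galois closure of $F/\Q$, then $S(\Q)$ gives $S(M)$ by the same observation, and if $V$ is an $\ell$-adic representation of $\Gamma_F$ coming from geometry then $V|_{\Gamma_M}$ comes from geometry over $M$, hence is semi-simple; since $[\Gamma_F:\Gamma_M]<\infty$ and $\Ql$ has characteristic $0$, averaging a $\Gamma_M$-equivariant projector over the cosets of $\Gamma_M$ in $\Gamma_F$ shows that $V$ itself is semi-simple.

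Next I would spread out the representation over a model. Let $F$ be the algebraic closure of $\Q$ in $k_0$ (a number field), and choose a smooth geometrically integral $F$-variety $U$ with function field $k_0$. An $\ell$-adic representation $V_\rho$ of $\Gamma_{k_0}$ coming from geometry is a subquotient of an object $W$ of $\Rep_{\Ql}(\Gamma_{k_0})$ obtained from finitely many $H^{i_j}_{\et}(X_{j,\overline{k_0}},\Ql)$, with $X_j/k_0$ smooth projective, by direct sums, tensor products and duals. After shrinking $U$ I would spread out each $X_j$ to a smooth projective morphism $f_j\colon\mathcal X_j\to U$; then the sheaves $R^{i_j}f_{j\ast}\Ql$ are lisse on $U$, the same construction produces a lisse sheaf $\mathcal W$ on $U$ with $\mathcal W_{\overline\eta}=W$, and — since $\Gamma_{k_0}$ surjects onto $\pi_1(U)$, so that the $\Gamma_{k_0}$-stable and $\pi_1(U)$-stable subspaces of $W$ coincide — $V_\rho$ extends to a lisse sheaf $\mathcal V$ on $U$ that is a subquotient of $\mathcal W$. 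For every closed point $u\in U$ the residue field $\kappa(u)$ is a number field, $\mathcal X_{j,u}$ is smooth projective over $\kappa(u)$, and smooth proper base change identifies $\mathcal W_u$ with the analogous construction on the $H^{i_j}_{\et}(\mathcal X_{j,u,\overline{\kappa(u)}},\Ql)$; hence the $\Gamma_{\kappa(u)}$-representation $\mathcal V_u$, being a subquotient of $\mathcal W_u$, comes from geometry over $\kappa(u)$, and is therefore semi-simple by the reductions above together with $S(\Q)$.

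Finally I would specialize. Let $G\subseteq\GL(\mathcal V_{\overline\eta})$ be the Zariski closure of the image of $\pi_1(U)$. Since a subspace of a representation is stable under a subgroup if and only if it is stable under its Zariski closure, semi-simplicity of a representation depends only on the Zariski closure of the image of the acting group; so it suffices to produce one closed point $u\in U$ such that the image of $\pi_1(u)$ in $\GL(\mathcal V_{\overline\eta})$ — through the map $\pi_1(u)\to\pi_1(U)$ attached to $u$ — is again Zariski-dense in $G$. For such a $u$, the (already established) semi-simplicity of $\mathcal V_u$ forces $\mathcal V_{\overline\eta}$ to be semi-simple as a $\pi_1(U)$-representation, hence $V_\rho$ is semi-simple; this yields $S(\Q)\Rightarrow S(k_0)$ and, with the first reduction, $S(\Q)\Rightarrow S(k)$.

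The existence of such a point is the heart of the matter — Serre's specialization argument — and the step I expect to be the main obstacle. The point is that the closed points $u$ for which the image of $\pi_1(u)$ fails to be Zariski-dense in $G$ should form a thin subset of $U$: bounding them requires control over the (finitely many relevant) closed subgroups of $G$ through which the monodromy can factor and over the behaviour of Zariski closures under specialization, after which one invokes Hilbert's irreducibility theorem — applicable since $F$, being a number field, is Hilbertian — to find a closed point of $U$ avoiding that thin set. Keeping the spreading-out compatible with the Tannakian "coming from geometry" description is a secondary, routine technical point.
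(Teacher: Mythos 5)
The paper offers no proof of this theorem --- it is quoted from \cite{Serre} --- so your proposal has to be measured against the standard specialization argument. Your reductions are all sound: the passage to a finitely generated $k_0$ via restriction to normal subgroups (after descending the relevant varieties to a finitely generated intermediate field, over which $k$ is still Galois), the deduction of $S(F)$ for number fields $F$ from $S(\Q)$ by Galois closure and averaging of projectors, the spreading out of $V_\rho$ to a lisse subquotient sheaf $\mathcal{V}$ on a model $U/F$ whose fibres at closed points come from geometry over number fields and are therefore semi-simple, and the reformulation of the goal as producing one closed point $u$ for which $\rho(\pi_1(u))$ has Zariski-dense (or merely open) image, since in characteristic $0$ semi-simplicity passes up from finite-index subgroups.

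However, the step you yourself flag as ``the heart of the matter'' is the entire content of the theorem, and your proposal does not supply it; moreover your heuristic for it points at the wrong object. The mechanism is not control of the closed algebraic subgroups of $G$: a priori one must avoid infinitely many proper closed subgroups of the monodromy, one for each possible specialized image, and there is no reason the resulting bad locus is thin. The correct object is the compact group $\Pi=\rho(\pi_1(U))\subseteq\GL(V_\rho)$, which is an $\ell$-adic analytic group, hence topologically finitely generated with \emph{open} Frattini subgroup $\Phi(\Pi)$ --- this is Serre's key lemma (compare the use of \cite{DSMS} in the proof of Lemma \ref{pibar:l}). The Frattini property converts the condition ``$\rho(\pi_1(u))=\Pi$'' into the condition that $\pi_1(u)$ surjects onto the \emph{single finite quotient} $\Pi/\Phi(\Pi)$; the latter corresponds to a finite \'etale cover $U'\to U$, the closed points failing it form a thin set, and Hilbert's irreducibility theorem over the Hilbertian number field $F$ (after reducing to affine space by a generically finite map) produces the desired $u$. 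Without this step the proposal has a genuine gap exactly where Serre's idea enters.
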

The theorem is an application of Serre's specialization method via Hilbert's irreducibility theorem.

\section{Some analogies}\label{anal:s}

Our work is based on the analogy between the field of rational numbers and the function field $\F_p(t)$, or more precisely between the Galois extensions $\Qab/\Q$ and $\overline{\F}_p(t)/{\F}_p(t)$. Note that both extensions are the maximal cyclotomic extensions of the respective base fields. The Galois group $\Gal(\Qab/\Q)$ is canonically isomorphic to $\hZ^{\times}$. If we denote by $\delta:\Gamma_{\Q}\twoheadrightarrow \hZ^{\times}$ the quotient induced by this identification, we get an exact sequence 
\begin{equation}\label{Q:eq}
1\to \Gamma_{\Qab}\to \Gamma_{\Q}\xrightarrow{\delta} \hZ^{\times}\to 1,
\end{equation}which is the analogue of (\ref{k:eq}). Following this analogy, we intend to investigate in this article the following conjecture which is inspired by Theorem \ref{Fu:t}.

\begin{conj}
\label{conj-A:c}
If an $\ell$-adic representation of $\Gamma_\Q$ coming from geometry is semi-simple at some unramified prime number $p$ different from $\ell$, then it is semi-simple as a representation of $\Gamma_\Q$.
\end{conj}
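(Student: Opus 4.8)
We would approach Conjecture~\ref{conj-A:c} by reducing it to the semi-simplicity conjecture for $\Qab$ — so that, strictly speaking, the argument below establishes the implication that $S(\Qab)$ implies Conjecture~\ref{conj-A:c}. Let $\rho$ be an $\ell$-adic representation of $\Gamma_\Q$ coming from geometry, semi-simple at an unramified prime $p\neq\ell$, and let $\rho^{\mathrm{ss}}$ be its semi-simplification as a representation of $\Gamma_\Q$; since $\bfC(\Q,\ell)$ is closed under subquotients and finite direct sums, $\rho^{\mathrm{ss}}$ again comes from geometry, and semi-simplicity of $\rho$ amounts to $\rho\cong\rho^{\mathrm{ss}}$. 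As $\rho$ and $\rho^{\mathrm{ss}}$ have the same Jordan--H\"older factors, every element of $\Gamma_\Q$ acts on them with the same characteristic polynomial; in particular a Frobenius at $p$ acts semi-simply on $\rho$ (by hypothesis) and on $\rho^{\mathrm{ss}}$ (always), hence by conjugate automorphisms, so $\rho|_{D_p}\cong\rho^{\mathrm{ss}}|_{D_p}$ for a decomposition group $D_p$ at $p$ (both being unramified there). Moreover, since the dual of a semi-simple automorphism is semi-simple and the tensor product of semi-simple automorphisms of finite-dimensional characteristic-$0$ vector spaces is semi-simple, a Frobenius at $p$ acts semi-simply on $(\rho^{\mathrm{ss}})^{\vee}\otimes\rho$ and on $(\rho^{\mathrm{ss}})^{\vee}\otimes\rho^{\mathrm{ss}}$, hence on every $\Gamma_\Q$-subrepresentation of these.

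We then restrict to $\Qab$. The representations $\rho|_{\Gamma_{\Qab}}$ and $\rho^{\mathrm{ss}}|_{\Gamma_{\Qab}}$ come from geometry over $\Qab$ (base change the defining smooth projective $\Q$-varieties), so, \emph{granting $S(\Qab)$}, both are semi-simple, and having the same Jordan--H\"older factors they are isomorphic: $\rho|_{\Gamma_{\Qab}}\cong\rho^{\mathrm{ss}}|_{\Gamma_{\Qab}}$. Set $M:=\Hom_{\Gamma_{\Qab}}\!\big(\rho^{\mathrm{ss}}|_{\Gamma_{\Qab}},\rho|_{\Gamma_{\Qab}}\big)$ and $M_0:=\End_{\Gamma_{\Qab}}\!\big(\rho^{\mathrm{ss}}|_{\Gamma_{\Qab}}\big)$, which are the $\Gamma_{\Qab}$-invariants of $(\rho^{\mathrm{ss}})^{\vee}\otimes\rho$ and of $(\rho^{\mathrm{ss}})^{\vee}\otimes\rho^{\mathrm{ss}}$; since $\Gamma_{\Qab}$ is normal in $\Gamma_\Q$, these are finite-dimensional continuous $\Ql$-representations of $\Gamma_\Q/\Gamma_{\Qab}\cong\Gal(\Qab/\Q)\cong\hZ^{\times}$ (Kronecker--Weber), and one has $\Hom_{\Gamma_\Q}(\rho^{\mathrm{ss}},\rho)=M^{\hZ^{\times}}$, $\End_{\Gamma_\Q}(\rho^{\mathrm{ss}})=M_0^{\hZ^{\times}}$. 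One always has $\dim M^{\hZ^{\times}}\le\dim M_0^{\hZ^{\times}}$, with equality exactly when $\rho$ is semi-simple, so it remains to prove this equality.

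Here the prime $p$ re-enters through the structure of $\hZ^{\times}$. A Frobenius at $p$ acts semi-simply on $M$ and $M_0$, by the last observation of the first paragraph. And a finite-dimensional continuous $\Ql$-representation of $\hZ^{\times}$ (or of an open subgroup of $\hZ^{\times}$) is semi-simple as soon as one such Frobenius acts semi-simply on it: the Zariski closure of the image is a commutative $\Ql$-group, hence a direct product of a group of multiplicative type and a unipotent part $U$, the induced continuous homomorphism $\hZ^{\times}\to U(\Ql)$ factors through $\Hom_{\mathrm{cont}}(\hZ^{\times},\Ql)\otimes_{\Ql}\mathrm{Lie}(U)$, and $\Hom_{\mathrm{cont}}(\hZ^{\times},\Ql)$ is one-dimensional over $\Ql$, spanned by the $\ell$-adic logarithm of the cyclotomic character; as the latter is nonzero on $p$ for every prime $p\neq\ell$, the unipotent part of the action of a Frobenius at $p$ equals $\log_\ell p$ times a fixed vector of $\mathrm{Lie}(U)$, and its vanishing forces that vector, hence $U$, to be trivial. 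Thus $M$ and $M_0$ are sums of characters of $\hZ^{\times}$, and one must compare the multiplicity of the trivial character in each; equivalently, one must show that every subquotient $E$ of $\rho$ which is a non-split extension of one Jordan--H\"older factor by another has split extension class. That class restricts to $0$ over $\Gamma_{\Qab}$ by $S(\Qab)$, so by inflation--restriction it comes from $H^{1}(\hZ^{\times},N)$, where $N$ is the $\hZ^{\times}$-module of $\Gamma_{\Qab}$-homomorphisms between the two factors; after a Clifford-theoretic reduction to a single isotypic component over $\Gamma_{\Qab}$, $H^{1}(\hZ^{\times},\psi)=0$ for a non-trivial character $\psi$, while for the trivial character a non-zero class would make a Frobenius at $p$ act non-semi-simply on $E$, contrary to the hypothesis.

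I expect the genuine obstacle to be the second step: the argument above is really a proof that $S(\Qab)$ implies Conjecture~\ref{conj-A:c}, and $S(\Qab)$ is not known unconditionally; removing it would require a new means of controlling $\rho|_{\Gamma_{\Qab}}$ — say by a density statement for a single Frobenius, which Serre's theory of Frobenius tori does not supply — and this seems beyond present techniques, which is presumably why the statement is only conjectured. The remaining difficulty is bookkeeping in the Clifford-theory step — in particular handling the Schur obstruction when a $\Gamma_\Q$-irreducible constituent does not extend to the stabilizer of its $\Gamma_{\Qab}$-constituents, for which one passes to a finite extension of $\Ql$ making those constituents absolutely irreducible — but this is routine next to the input $S(\Qab)$.
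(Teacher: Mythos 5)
Your argument is correct in substance, but note first that — exactly as you say yourself — it proves only the implication ``$S(\Qab)$ implies Conjecture~\ref{conj-A:c}'', which is precisely what the paper proves (Theorem~\ref{pt-ab:t}); the conjecture itself is left open there too, so you and the paper are establishing the same conditional statement. The two proofs rest on the same structural facts about $\Gal(\Qab/\Q)\cong\hZ^\times$: that its torsion-free part is ``one-dimensional'' (the paper's Lemma~\ref{Iwas:l}, your observation that $\Hom_{\mathrm{cont}}(\hZ^\times,\Ql)$ is spanned by the logarithm of the cyclotomic character) and that a Frobenius at $p\neq\ell$ has non-trivial image in the $\Zl$-quotient (the paper's Lemma~\ref{image-p:l}, your $\log_\ell p\neq 0$). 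But the way you assemble them is genuinely different. The paper transposes Fu's function-field argument: it introduces the Weil group $W_{\Q,F_p}=\Gamma_{\Qab}\rtimes\Z$, proves that the closure of its image has finite index in the image of $\Gamma_\Q$ (Proposition~\ref{rest-to-Weil:p}), and then runs the algebraic-group argument on $G^0\rtimes\Z$ using Deligne's Lemme~1.3.10 and Fu's Lemma~2. You instead argue through extension classes: a non-split length-two subquotient $0\to A\to E\to B\to 0$ has class dying over $\Gamma_{\Qab}$ by $S(\Qab)$, hence inflated from $H^1(\hZ^\times,\Hom_{\Gamma_{\Qab}}(B,A))$; the coefficient module is a sum of characters because a commutative $\ell$-adic group on which one element with non-trivial $\Zl$-image acts semi-simply acts semi-simply throughout; non-trivial characters contribute no $H^1$, and a class supported on the trivial character would give $\rho(F_p)$ a non-trivial unipotent part commuting with its semi-simple part, contradicting the hypothesis. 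This is a valid alternative: it avoids the Weil-group formalism and the appeal to Deligne's Lemme~1.3.10 entirely, and it makes more transparent where $p\neq\ell$ enters. What the paper's route buys is the direct analogy with Theorem~\ref{Fu:t} and the finite-index statement of Proposition~\ref{rest-to-Weil:p}, which has independent interest.

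Two points in your write-up deserve tightening. The detour through $\dim M^{\hZ^\times}\le\dim M_0^{\hZ^\times}$ is correct but ultimately unused, since the argument that closes the proof is the $\mathrm{Ext}$-computation; and the ``Clifford-theoretic reduction'' at the end is vaguer than necessary — it suffices to perform the standard dévissage to length-two subquotients of $\rho$ and then decompose $N=\Hom_{\Gamma_{\Qab}}(B,A)$ into characters after a finite extension of scalars (harmless for both the vanishing of $H^1$ and the detection of splitness), so no genuine Clifford theory or Schur obstruction arises.
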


We shall show in the next section how to adapt Fu's proof of Theorem \ref{Fu:t} in order to prove that $S(\Qab)$ implies Conjecture \ref{conj-A:c}. But before going into further details, we would like to speculate a bit more on $S(\Qab)$. Continuing the previous analogy, we wonder whether is it possible to prove $S(\Qab)$, as for Theorem \ref{deligne:t}, via a suitable theory of weights for the $\ell$-adic representations of $\Gamma_{\Qab}$. Note that one cannot hope that every pure $\ell$-adic representation of $\Gamma_{\Q}$ is semi-simple when restricted to $\Gamma_{\Qab}$, as we illustrate in the following example.

\begin{exam}
	\label{exte:ex}
Let $K/\Q$ be an imaginary quadratic extension and let $K_{\infty}^-/K$ be the \textit{anti-cyclotomic $\Zl$-extension} of $K$, namely that $\Zl$-extension of $K$ which is also a non-abelian Galois extension of $\Q$. We choose an isomorphism of the Galois group $\Gal(K_{\infty}^-/\Q)$ with $\Zl\rtimes \Z/2$, where $\Zl$ corresponds to $\Gal(K_{\infty}^-/K)$ and $\Z/2$ acts non-trivially on $\Zl$. Write $\chi$ for the non-trivial character of $\Gal(K/\Q)$. We claim that there exists a non-trivial extension of $\ell$-adic representations of $\Gamma_{\Q}$ $$0\to \chi \to V\to \Ql\to 0$$ which becomes trivial when restricted to $\Gamma_{K_{\infty}^-}$. This is constructed by mapping $$(1,0)\mapsto \begin{pmatrix}
1 & 1 \\
0 & 1
\end{pmatrix} \textrm{ and  } (0,1)\mapsto \begin{pmatrix}
-1 & 0 \\
0 & 1
\end{pmatrix}.$$ This extension is non-trivial when restricted to $\Gamma_{\Qab}$ because $K_{\infty}^-$ is not in $\Qab$. On the other hand, $V$ is manifestly pure of weight $0$.
\end{exam}

In order to exclude extensions as the one presented in the previous example, it is reasonable to add conditions on the $\ell$-adic representations considered using \textit{$\ell$-adic Hodge theory}. For example, in Example \ref{exte:ex}, since $V|_{\Gamma_{\Ql^{\mathrm{ab}}}}$ is a unipotent non-trivial representation, it is not \textit{Hodge--Tate}, \cite[§2.4.5]{BC09}. On the other hand, the $\ell$-adic representations coming from geometry are \textit{de Rham} at $\ell$, \cite{Fal89}. Let us explain a refined hope.

\subsection{}

In order to prove $S(\Qab)$, one has to show that extensions of representations of $\Gamma_{\Qab}$ coming from geometry are all trivial. It is easy to see that, without loss of generality, one can simply consider extensions of the trivial representation by another representation.

Let $V$ be an $\ell$-adic representation of $\Gamma_\Q$ coming from geometry and of weight $0$ and let $N$ be a multiple of the product of all the prime numbers where $V$ is ramified. We consider the vector space $$H:=H^1_{\et}(\Spec(\Z[\zeta_\infty,N^{-1}]),V)$$endowed with the left action of the group $\Aut(\Z[\zeta_\infty,N^{-1}])=\hZ^\times$ acting by pushforward. The group $H$ parametrises all the extensions of $\Ql$ by $V|_{\Gamma_{\Qab}}$ which are unramified away of $N$. We choose a prime number $p\nmid N\ell$ and a lift of $p$ via the quotient map $\hZ^\times\twoheadrightarrow (\hZ/\Z_p)^{\times}$, denoted by $\widetilde{p}\in \hZ^\times$. Write $\varphi_{\tilde{p}}$ for the endomorphism of $H$ induced by $\tilde{p}$. Also, let $H_{g} \subseteq H$ be the \textit{Selmer group} obtained by imposing local de Rham conditions at $\ell$, as in \cite{BK90}. Suppose that the following assumption is true.
	
	\begin{itemize}
		\item[$W(V,N)$:] For every eigenvalue $\alpha$ of $\varphi_{\tilde{p}}$ acting on $H_{g}$ there exists an embedding $\iota: \Qlbar\hookrightarrow \mathbb{C}$ such that $|\iota(\alpha)|>1$.
	\end{itemize}
	 Then it follows that $\varphi_{\tilde{p}}$ acts without fixed points on $H_{g}$. In turn, this implies that every extension of $\Ql$ by $V$ over $\Spec(\Z[\zeta_\infty,N^{-1}])$ which descends to $\Spec(\Z[N^{-1}])$ and comes from geometry is trivial. We proved the following.
	 
	 \begin{prop}
	 	
If $W(V,N)$ is true for every $V$ and $N$ as above, then $S(\Qab)$ is true.
	 \end{prop}

\section{Our main results}
We choose a closed embedding $\Gamma_{\Qp}\subseteq \Gamma_\Q$ induced by a field embedding $\Qbar\hookrightarrow \Qpbar$ and a Frobenius lift $F_p\in \Gamma_{\Qp}\subseteq \Gamma_\Q$. We want to prove in this section the following result.
\begin{theo}
	\label{pt-ab:t}Let $\rho$ be an $\ell$-adic representation of $\Gamma_\Q$ which is semi-simple when restricted to $\Gamma_{\Qab}$. If there exists a prime number $p\neq \ell$ and a Frobenius element $F_p\in \Gamma_\Q$ such that $\rho(F_p)$ is semi-simple, then $\rho$ is a semi-simple representation of $\Gamma_\Q$. In particular, $S(\Qab)$ implies Conjecture \ref{conj-A:c}.
\end{theo}
Before going into the proof, let us first see how to deduce from Theorem \ref{pt-ab:t} the main result of our article.
\begin{theo}\label{main:t}
	Let $k$ be a Galois extension of a finitely generated field. The conjunction of $S(\Fp)$ for every prime number $p$ and $S(\Qab)$ implies $S(k)$.
\end{theo}

\begin{proof}
If $k$ has positive characteristic $p$, thanks to Theorem \ref{Fu:t}, we have that $S(\Fp)$ implies $S(k)$. We pass to characteristic $0$. Thanks to Theorem \ref{s-s-char-0:t}, it is enough to prove $S(\Q)$. Also, in light of Theorem \ref{pt-ab:t}, we know that $S(\Qab)$ implies Conjecture \ref{conj-A:c}. Let us show how to deduce $S(\Q)$ from here. Let $X$ be a smooth projective variety over $\Q$ and let $i$ be a natural number. We choose a prime number $p$ where $X$ admits good reduction $\overline{X}/\Fp$. Thanks to $S(\Fp)$, we know that the Frobenius acting on $H^i_{\et}(\overline{X}_{\overline{\mathbb{F}}_p},\Ql)$ is semi-simple. By the smooth and proper base-change theorem, the action of $F_p$ on $H^i_{\et}(X_{\overline{\mathbb{Q}}},\Ql)$ is semi-simple as well. By virtue of Conjecture \ref{conj-A:c}, this implies that the representation of $\Gamma_{\Q}$ on $H^i_{\et}(X_{\overline{\mathbb{Q}}},\Ql)$ is semi-simple. Since this holds for every $X$ and $i$, we get $S(\Q)$. 	
\end{proof}
In order to prove Theorem \ref{pt-ab:t}, we adapt Fu's proof in \cite{Fu} to our situation following the analogy in §\ref{anal:s}. For this purpose, we introduce an \textit{ad hoc} notion of a Weil group of $\Q$. 
\begin{defi}For every $n\in \Z$, the element $F_p^n\in \Gamma_{\Q}$ acts on $\Gamma_{\Qab}$ by conjugation. This induces a continuous action of $\Z$, with the discrete topology, on $\Gamma_{\Qab}$. Let $W_{\Q,F_p}$ be semi-direct product $\Gamma_{\Qab}\rtimes \Z$ as topological groups, where $\Z$ acts as above. We say that $W_{\Q,F_p}$ is the \textit{Weil group of $\Q$ with respect to $F_p$}.
\end{defi}
\subsection{}

The group $W_{\Q,F_p}$ sit in the following commutative diagram with exact rows.

\begin{equation}
\begin{tikzcd}
\label{w:eq}
1 \arrow{r}& \Gamma_{\Qab} \arrow{r}\arrow[d,equal] &W_{\Q,F_p} \arrow{r}\arrow[d,hook]& \Z \arrow{r}\arrow[d,hook]&1\\
1\arrow{r} & \Gamma_{\Qab} \arrow{r} & \Gamma_{\Q}  \arrow{r} & \hZ^{\times}\arrow{r} &1
\end{tikzcd}
\end{equation}
where the injective map $W_{\Q,F_p}\hookrightarrow \Gamma_\Q$ sends $1\in \Z$ to $F_p$. Following the proof of \cite{Fu}, it is rather straightforward to prove that if $\rho$ is a representation satisfying the hypothesis in Theorem \ref{pt-ab:t}, then $\rho|_{W_{\Q,F_p}}$ is semi-simple. The additional issue in our set-up is that the subgroup $W_{\Q,F_p}$ is not dense in $\Gamma_{\Q}$. The closure of $W_{\Q,F_p}$ in $\Gamma_{\Q}$ is not even of finite index. Nonetheless, it is still possible to prove the following result.

\begin{prop}\label{rest-to-Weil:p}
	Let $\rho$ be an $\ell$-adic representation of $\Gamma_{\Q}$. If for some prime number $p\neq\ell$ and for some choice of $F_p$ its restriction to $W_{\Q,F_p}$ is semi-simple, then $\rho$ is a semi-simple representation of $\Gamma_{\Q}$.
\end{prop}

In order to prove Proposition \ref{rest-to-Weil:p}, we need a couple of preparatory lemmas. We start by recalling a well-known fact on the quotients of $\hZ^\times$. It is worth mentioning that this lemma is also one of the main ingredients in the proof of Moonen's recent result on the semi-simplicity conjecture, \cite{Moo}.
\begin{lemm}\label{Iwas:l}
	There exits a unique continuous quotient $\hZ^{\times}\twoheadrightarrow \Zl$ up to automorphisms of the target.
\end{lemm}
\begin{proof}
By the Chinese remainder theorem, $\hZ^{\times}$ is isomorphic to the product $\prod_{\ell'}\Z_{\ell'}^\times$ where $\ell'$ varies among all the prime numbers. We choose a continuous isomorphism between $\Zl^{\times}/torsion$ and $\Zl$ which, in turn, induces a quotient $\pi:\hZ^{\times}\twoheadrightarrow \Zl^{\times}/torsion=\Zl$. 

Passing to the unicity, let $\pi':\hZ^{\times}\twoheadrightarrow\Zl$ be another $\Zl$-quotient of $\hZ^{\times}$. We want to check that $\pi$ and $\pi'$ are the same up to automorphisms of $\Zl$. To do this, we may replace $\hZ^{\times}$ with the dense subgroup $\bigoplus_{\ell'} \Z_{\ell'}^{\times}\subseteq \prod_{\ell'}\Z_{\ell'}^\times$. Since $\Zl$ is torsion-free and there are no non-trivial morphisms $\Z_{\ell'}\to\Zl$ when $\ell'\neq \ell$, every continuous morphism from $\bigoplus_{\ell'} \Z_{\ell'}^{\times}$ to $\Zl$ factors through $\pi$. Therefore, there exists a continuous endomorphism $\alpha$ of $\Zl$ such that $\pi'=\alpha\circ \pi$. Since $\pi'$ is surjective, $\alpha$ is surjective. This implies that $\alpha$ is an automorphism, as we wanted.
\end{proof}
 We fix from now on a quotient morphism $\delta_\ell: \Gamma_{\Q}\twoheadrightarrow \Zl$.

\begin{cons}
Let $\rho$ be an $\ell$-adic representation of $\Gamma_\Q$. We write $\Pi$ for the image of $\Gamma_\Q$ and $\Pi^0$ for the image of $\Gamma_{\Qab}$. We also denote by $\Pibar$ the quotient $\Pi/\Pi^0$ and by $\pi$ the natural projection $\pi:\Pi\twoheadrightarrow \Pibar$. We obtain the following commutative diagram of profinite groups with exact rows

\begin{center}
	\begin{tikzcd}
	1 \ar{r}& \Gamma_{\Qab} \ar{r}\ar[d,two heads,"\rho"] &\Gamma_\Q \ar[r,"\delta"]\ar[d,two heads,"\rho"]& \hZ^{\times} \ar[r]\ar[d,two heads]& 1\\
	1\ar{r} & \Pi^0 \ar{r} & \Pi  \ar[r,"\pi"] & \Pibar\ar{r} & 1.
	\end{tikzcd}
\end{center}
\end{cons}

\begin{lemm}
	\label{pibar:l}
	The group $\Pibar$ is either a finite abelian group or it admits a surjective morphism $\overline{\pi}_\ell: \Pibar \twoheadrightarrow \Zl$ with finite kernel such that $\delta_\ell=\overline{\pi}_\ell \circ \pi \circ \rho$. 
\end{lemm}
\begin{proof}

The group $\Pi$ is a closed subgroup of the topological group $\GL(V_\rho)$, thus it can be endowed with the structure of an \textit{$\ell$-adic analytic group}, \cite[Theorem 9.6]{DSMS}. By [\textit{ibid.}, Theorem 8.32], this implies that $\Pi$ contains an open topologically finitely generated pro-$\ell$-subgroup. Since $\pi$ is surjective, the same is then true for $\Pibar$. Besides, note that the group $\Pibar$, being a quotient of $\hZ^\times$, is an abelian group. 

Let $\Pibar_\ell\subseteq \Pibar$ be the maximal pro-$\ell$-subgroup of $\Pibar$. By the previous, $\Pibar_\ell$ is a finitely generated $\Zl$-module. Moreover, since $\Pibar$ is abelian, the inclusion $\Pibar_\ell\subseteq \Pibar$ admits a retraction. This implies that
 there exists a surjective morphism $\hZ^{\times}\twoheadrightarrow\Pibar_\ell$. By Lemma \ref{Iwas:l}, $\Pibar_\ell$ is a $\Zl$-module of rank at most 1. This means precisely that either $\Pibar$ is finite or it admits a quotient $\overline{\pi}_\ell:\Pibar\twoheadrightarrow\Zl$ with finite kernel. In the second case, thanks to Lemma \ref{Iwas:l}, up to post-composing $\overline{\pi}_\ell$ with an automorphism of $\Zl$, the quotients $\delta_\ell$ and $\overline{\pi}_\ell \circ \pi \circ \rho$ are equal. This concludes the proof.

\end{proof}

\subsection{} \emph{Proof of Proposition \ref{rest-to-Weil:p}.}
Let $\Pi_1$ be the closure of the image of $W_{\Q,F_p}$ in $\Pi$. In light of \cite[Lemma 1]{Fu}, it is enough to show that $\Pi_1$ has finite index in $\Pi$. We note that since $\Pi_1$ contains $\Pi^0$, if we set $\Pibar_1:=\pi(\Pi_1)$, then $[\Pi: \Pi_1]=[\Pibar: \Pibar_1]$. Thanks to this we are reduced to showing that $\Pibar_1$ has finite index in $\Pibar$. 

By Lemma \ref{pibar:l}, we have two cases. If $\Pibar$ is finite the result holds trivially. If $\Pibar$ is infinite, we take $\overline{\pi}_\ell: \Pibar\twoheadrightarrow\Zl$ as in the lemma. Since $\overline{\pi}_\ell$ has finite kernel, to prove that $\Pibar_1$ has finite index in $\Pibar$ it is enough to show that $\overline{\pi}_\ell(\Pibar_1)$ has finite index in $\Zl$. The profinite group $\Pibar_1$ is topologically generated by $\rho(F_p)$, therefore the group $\overline{\pi}_\ell(\Pibar_1)$ is topologically generated by $\delta_\ell(F_p)$. We conclude the proof by virtue of the following lemma.

\qed
\begin{lemm}
	\label{image-p:l}
	If $\ell\neq p$, the closure of the group generated by $\delta_\ell(F_p)$ in $\Zl$ is an open subgroup.
\end{lemm}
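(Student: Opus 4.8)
The plan is to unwind the definition of $\delta_\ell$ in terms of the cyclotomic character and then reduce the statement to the elementary fact that a prime $p$, regarded inside $\Z_\ell^\times$ for $\ell\neq p$, is not a root of unity.

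First I would recall that, by the Kronecker--Weber theorem, $\Qab=\Q(\zeta_\infty)$, and the identification $\hZ^\times=\Gal(\Qab/\Q)$ is given by the cyclotomic character $\chi=(\chi_{\ell'})_{\ell'}\colon \Gamma_\Q\to\prod_{\ell'}\Z_{\ell'}^\times$. Let $\chi_\ell\colon\Gamma_\Q\to\Z_\ell^\times$ be the $\ell$-adic component, i.e. the composition of $\delta$ with the projection $\hZ^\times\to\Z_\ell^\times$. Since $\Gamma_{\Qab}$ fixes all $\ell$-power roots of unity, $\chi_\ell$ is trivial on $\Gamma_{\Qab}=\Ker\delta$, hence factors through $\delta$; composing further with the unique continuous surjection $\psi\colon\Z_\ell^\times\twoheadrightarrow\Zl$ (it exists and is unique since $\Z_\ell^\times$ modulo its torsion subgroup is free of rank one over $\Zl$ and $\Zl$ is torsion-free) produces a surjection $\Gamma_\Q\twoheadrightarrow\Zl$. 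By Lemma~\ref{Iwas:l} this surjection agrees with $\delta_\ell$ up to an automorphism of $\Zl$; in particular whether $\delta_\ell(F_p)$ vanishes does not depend on the chosen quotient map, and it suffices to show $\psi(\chi_\ell(F_p))\neq 0$.

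Next, since $\ell\neq p$, the character $\chi_\ell$ is unramified at $p$ and $\chi_\ell(F_p)=p$ as an element of $\Z_\ell^\times$ (with the other Frobenius normalisation one gets $p^{-1}$, which changes nothing below). The kernel of $\psi$ is exactly the torsion subgroup of $\Z_\ell^\times$. I would then observe that the integer $p\geq 2$ is \emph{not} torsion in $\Z_\ell^\times$: if $p^m=1$ in $\Z_\ell$ for some $m\geq 1$, then $p^m=1$ already in $\Z$ because $\Z\hookrightarrow\Z_\ell$, which is impossible. Therefore $\delta_\ell(F_p)=\psi(p)\neq 0$, and since the only closed subgroups of $\Zl$ are $\{0\}$ and the open subgroups $\ell^n\Zl$, the closure of the group generated by $\delta_\ell(F_p)$ is open.

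I do not expect a real obstacle here. The only point needing care is the bookkeeping in the first paragraph — identifying $\delta_\ell$ with $\psi\circ\chi_\ell$ up to an automorphism of $\Zl$ — which rests on Lemma~\ref{Iwas:l} together with the triviality of $\chi_\ell$ on $\Gamma_{\Qab}$; everything afterward is the trivial remark that a prime number is not a root of unity in a characteristic-zero local field.
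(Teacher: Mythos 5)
Your proof is correct and takes essentially the same route as the paper: both reduce to showing $\delta_\ell(F_p)\neq 0$ by identifying it with the image of $p\in\Zl^\times$ (the Frobenius acting on $\Q(\zli)$ by raising roots of unity to $p$-th powers) under the essentially unique surjection $\Zl^\times\twoheadrightarrow\Zl$, whose kernel is the finite torsion subgroup, and then using that $p$ has infinite order in $\Zl^\times$. Your bookkeeping via the cyclotomic character and Lemma~\ref{Iwas:l} is just a more explicit rendering of the same argument.
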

\begin{proof}
	Let $H$ be the closure of the group generated by $\delta_\ell(F_p)$ in $\Zl$. Since $H$ is a subgroup, for every $n\in \Z$ the multiplication by $n$ in $\Zl$ maps $H$ to $H$. This implies that $H$ is an ideal of the DVR $\Zl$. It remains to show that $H\neq (0)$ or, equivalently, that $\delta_\ell(F_p)\neq 0$.
	
	By Lemma \ref{Iwas:l}, the quotient $\delta_\ell$ factors through $\Gal(\Q(\zli)/\Q)$. Let $\Qp^{\mathrm{ur}}$ be the maximal unramified extension of $\Qp$ in $\Qpbar$. The restriction of $\delta_\ell$ to $\Gamma_{\Qp}$ factors through $\Gal(\Qp^{\mathrm{ur}}/\Qp)$, because the extension $\Q(\zli)/\Q$ is unramified at $p$. To summarize we have the following commutative diagram
	
		\begin{center}
		\begin{tikzcd}
		\Gamma_{\Qp}\ar[r,two heads]\ar[d,hook] &\Gal(\Qp^{\mathrm{ur}}/\Qp) \ar[rd, bend left=25,two heads]\ar[d]\\
	    \Gamma_{\Q} \ar[rr, bend right=15, "\delta_\ell"',two heads] \ar[r,two heads] & \Gal(\Q(\zli)/\Q)\ar[r,two heads] & \Zl.
		\end{tikzcd}
	\end{center}

	By definition, the image of $F_{p}\in \Gamma_{\Qp}$ in $\Gal(\Qp^{\mathrm{ur}}/\Qp)$ is the Frobenius lift. Therefore, the image of $F_p$ in $\Gal(\Q(\zli)/\Q)$ is the unique automorphism of $\Q(\zli)$ which raises each root of unit to its $p$-th power. This automorphism has manifestly infinite order in $\Gal(\Q(\zli)/\Q)$. Since the quotient $\Gal(\Q(\zli)/\Q)\twoheadrightarrow \Zl$ has finite kernel, this implies that $\delta_\ell(F_p)\neq 0$, as we wanted.

\end{proof}

\subsection{}\emph{Proof of Theorem \ref{pt-ab:t}}. By Proposition \ref{rest-to-Weil:p}, it is enough to check that $\rho$ is semi-simple when restricted to $W_{\Q,F_p}\subseteq\Gamma_{\Q}$. Let $G^0$ be the Zariski closure of $\Pi^0$ in $\GL(V_\rho)$ and let $G_{F_p}$ be the semi-direct product $G^0\rtimes \Z$ as group schemes where $1\in \Z$ acts on $G^0$ as $\rho(F_p)$ acts on $G^0$ by conjugation. We define $\widetilde{\rho}:W_{\Q,F_p}\to G_{F_p}$ as the only morphism making the following diagram commuting
\begin{center}
	\begin{tikzcd}
		1 \ar{r}& \Gamma_{\Qab} \ar{r}\ar[d,"\rho|_{\Gamma_{\Qab}}"] &W_{\Q,F_p} \ar[r]\ar[d,"\widetilde{\rho}"]& \Z \ar[r]\ar[d,equal]& 1\\
		1\ar{r} & G^0 \ar{r} & G_{F_p}\ar[r] & \Z \ar{r}& 1.
	\end{tikzcd}
\end{center}
Let $\sigma:G_{F_p}\to \GL(V_\rho)$ be the representation which extends the tautological representation $G^0\hookrightarrow \GL(V_\rho)$ by sending $1\in \Z$ to $\rho(F_p)$. The composition $\sigma \circ \widetilde{\rho}: W_{\Q,F_p}\to \GL(V_\rho)$ is equal to $\rho$. Since $\widetilde{\rho}$ has Zariski-dense image, in order to show that the restriction of $\rho$ to $W_{\Q,F_p}$ is semi-simple it is enough to check that $\sigma$ is semi-simple.

The group $G^0$ is reductive because, by assumption, $\rho$ is semi-simple when restricted to $\Gamma_{\Qab}$. Thanks to \cite[Lemme 1.3.10]{Weil2}, there exists a $\Qlbar$-point $g$ in the centre of $G_{F_p}$ of the form $(g',d)$ where $g'\in G^0(\Qlbar)$ and $d\neq 0$. Since $\rho(F_p)$ is semi-simple, we may apply \cite[Lemma 2]{Fu} to the representation $\sigma$, obtaining thereby the desired result.

\qed

\section*{Acknowledgments}

I thank my advisor Hélène Esnault and Matteo Tamiozzo for all the valuable remarks on a first draft of this article. I also thank Ben Moonen, Kazuya Kato, Daniel Litt and Peter Scholze for enlightening discussions on the topic. Finally, I thank the anonymous referees for helpful suggestions.

\bibliographystyle{ams-alpha}

\end{document}